\documentclass[12pt]{amsart}
%%%%%% Packages (do NOT mess around here!) %%%%%%
\usepackage[utf8]{inputenc}
\usepackage[T1]{fontenc}
\usepackage{amsfonts}
\usepackage[leqno]{amsmath}
\usepackage{amssymb, xcolor, comment, float}
\usepackage[foot]{amsaddr}
\usepackage[shortlabels]{enumitem}
\usepackage{mathdots}
\bibliographystyle{plain}
\usepackage[a4paper, footskip=0.5in, headheight = 0.5in, top=1.25in, bottom=1.25in,  right=1in,  left=1in]{geometry}
\usepackage{hyperref}
\hypersetup{
colorlinks,
linkcolor=black,
urlcolor=darkblue,
citecolor=black,}
\usepackage[center]{caption}
\usepackage{eufrak}
\usepackage{marvosym}     
\usepackage{latexsym}
\usepackage[nodayofweek]{datetime}
\usepackage{tikz}
\usepackage{subfig}

%%%%%%%% Theorem environments
\newtheorem{theorem}{Theorem}[section]
\newtheorem{lemma}[theorem]{Lemma}

\newtheorem{question}[theorem]{Question}

%%%%%%%% New math operators %%%%%%%%
%\DeclareMathOperator{\tw}{tw}
%\DeclareMathOperator{\ta}{{\text{tree}-\alpha}}
% \DeclareMathOperator{\cl}{cl}
% \DeclareMathOperator{\cone}{{\text{\sf{cone}}}}

%%%%%%%% For paths %%%%%%%%

%%%%%%%% New commands %%%%%%%%

\newcommand{\poi}{\mathbb{N}} 

%%%%%%%% The statement environment %%%%%%%%
\newcounter{tbox}
\newcommand{\sta}[1]{\medskip\medskip\refstepcounter{tbox}\noindent{\parbox{\textwidth}{(\thetbox) \emph{#1}}}\vspace*{0.3cm}}
\newcommand{\mylongtitle}[1]{%
  \ifodd\value{page}%
    \protect\parbox{0.97\linewidth}{#1}\hfill%
  \else%
    \hfill\protect\parbox{0.97\linewidth}{#1}%
  \fi%
}

%%%%%%% The black square at the end of proofs %%%%%%%%

%%%%%%% Title %%%%%%%

\title[Induced subdivisions with pinned branch vertices]{Induced subdivisions with pinned branch vertices}

\author{Sepehr Hajebi $^{\mathsection \parallel}$}

\address{$^{\mathsection}$Department of Combinatorics and Optimization, University of Waterloo, Waterloo, Ontario, Canada}
\date {\today}
\address{$^{\parallel}$ We acknowledge the support of the Natural Sciences and Engineering Research Council of Canada (NSERC), [funding reference number RGPIN-2020-03912].
Cette recherche a \'et\'e financ\'ee par le Conseil de recherches en sciences naturelles et en g\'enie du Canada (CRSNG), [num\'ero de r\'ef\'erence RGPIN-2020-03912]. This project was funded in part by the Government of Ontario.}

\begin{document}
\maketitle
\begin{abstract}
We prove that for all   $r \in \poi\cup \{0\}$ and $s,t\in \poi$, there exists   $\Omega=\Omega(r,s,t)\in \poi$ with the following property. Let $G$ be a graph and let $H$ be a subgraph of $G$ isomorphic to a $(\leq r)$-subdivision of $K_{\Omega}$. Then either $G$ contains $K_t$ or $K_{t,t}$ as an induced subgraph, or there is an induced subgraph $J$ of $G$ isomorphic to a proper $(\leq r)$-subdivision of $K_s$ such that every branch vertex of $J$ is a branch vertex of $H$. This answers in the affirmative a question of Lozin and Razgon. In fact, we show that both the branch vertices and the paths corresponding to the subdivided edges between them can be preserved. 

\end{abstract}

\section{Introduction}\label{sec:intro}

The set of all positive integers is denoted by $\poi$. Graphs in this paper have finite vertex sets, no loops and no parallel edges. Let $G=(V(G),E(G))$ be a graph. For $X \subseteq V(G)$, we denote by $G[X]$ the subgraph of $G$ induced by $X$ (we refer to induced subgraphs and their vertex sets interchangeably).

A \textit{subdivision} of a graph $H$ is a graph $H'$ obtained from $H$ by replacing the edges of $H$ with
pairwise internally disjoint induced paths of non-zero lengths between the corresponding ends (the \textit{length} of a path is its number of edges). In this case, we refer to the vertices of $H$ as the \textit{branch vertices} of $H'$, and to the induced paths in $H'$ replacing the edges of $H$ as the \textit{subdivision paths}. For   $p \in \poi\cup \{0\}$, we say $H'$ is a \textit{$(\leq p)$-subdivision} of $H$ if all subdivision paths in $H'$ have length at most $p+1$. We also say $H'$ is a \textit{proper subdivision} of $H$ if all subdivision paths in $H'$ have length at least two.
\medskip

In \cite{lozin}, Lozin and Razgon proved the following.

\begin{theorem}[Lozin and Razgon, Theorem 3 in \cite{lozin}]\label{thm:lozin}
    For all positive integers $r$ and $p$, there is a positive integer $m = m(r,p)$ such that every graph $G$ containing a $(\leq p)$-subdivision of $K_m$ as a subgraph contains either $K_{p,p}$ as a subgraph or a proper $(\leq  p)$-subdivision of $K_{r,r}$ as an induced subgraph.
\end{theorem}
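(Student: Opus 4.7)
The plan is to prove Theorem~\ref{thm:lozin} by an iterated Ramsey argument that uniformizes the structure of the $(\leq p)$-subdivision of $K_m$ inside $G$, followed by a dichotomy. Let $H$ be the given subdivision with branch vertices $v_1,\ldots,v_m$ and subdivision paths $P_{ij}$ of length in $\{1,\ldots,p+1\}$.

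First I would apply Ramsey's theorem to pairs of branch vertices, coloring each $\{v_i,v_j\}$ by $|E(P_{ij})|$. With $p+1$ colors, passing to a subset of $m_1$ branch vertices yields a common length $\ell$. If $\ell=1$ then the retained branch vertices form $K_{m_1}$ in $G$, already containing $K_{p,p}$ as a subgraph once $m_1\geq 2p$; so assume $\ell\geq 2$. Next, a further Ramsey on pairs uniformizes the labeled graph $G[V(P_{ij})]$ to a fixed graph $\Gamma$, while Ramseys on triples and quadruples uniformize the induced subgraphs of $G$ on $V(P_{ij})\cup V(P_{ik})\cup V(P_{jk})$ and on the full union over any quadruple $\{i,j,k,l\}$. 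Each stage uses only a bounded number of colors (depending on $p$), so we retain $m_2$ branch vertices with $m_2$ as large as desired. Let $\ell'$ be the length of a shortest induced $v_iv_j$-path in $\Gamma$: if $\ell'=1$ then all branch vertices are pairwise adjacent, giving $K_{p,p}$; otherwise $\ell'\in\{2,\ldots,\ell\}$, and I would replace every $P_{ij}$ by a canonical shortest induced sub-path $\tilde P_{ij}$, which is then chord-free in $G$.

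Now comes the dichotomy. Let $\tilde H=\bigcup_{i<j}\tilde P_{ij}$. If no edge of $G$ joins two vertices of $\tilde H$ outside of $E(\tilde H)$, then $\tilde H$ is an induced proper $\ell'$-subdivision of $K_{m_2}$ in $G$; choosing any $2r$ branch vertices, splitting them into two classes of size $r$, and keeping only the $r^2$ cross-paths (together with their branch-vertex endpoints) yields the required induced proper $(\leq p)$-subdivision of $K_{r,r}$, because same-class branch vertices are non-adjacent in $G$ by inducedness of $\tilde H$. Otherwise, some extra edge $uw\in E(G)\setminus E(\tilde H)$ has $u\in V(\tilde P_{ij})$ and $w\in V(\tilde P_{kl})$; by the uniformity achieved above, the analogous extra edge exists for every triple or quadruple of branch vertices playing the same role. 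A linear-order trick then extracts $K_{p,p}$ as a subgraph: fix one or two anchor indices, let the remaining indices range over two disjoint ``windows'' of size $p$ in $\{1,\ldots,m_2\}$, and read off two $p$-sets of occurrences of $u$ and of $w$ that are disjoint and fully joined by the extra edges.

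The main obstacle will be the final case analysis. The positions of $u$ and $w$ (interior vs endpoint of their respective paths) split the extra-edge case into several subcases. The chord-freeness of each $\tilde P_{ij}$ rules out the degenerate configurations (a chord inside a single path, or an edge between two endpoints of distinct paths of the same triple, which would be a chord of the third path). The surviving subcases all yield $K_{p,p}$ by the same linear-order trick, possibly replacing some interior occurrences of $u$ by branch vertices. Iterating the Ramsey bounds through the whole argument then produces the required $m(r,p)$.
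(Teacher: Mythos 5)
Your argument is sound, and it is worth noting at the outset that this paper does not actually prove Theorem~\ref{thm:lozin} --- it is quoted from \cite{lozin} --- so the fair comparison is with the paper's proof of its stronger Theorem~\ref{mainthm}/\ref{mainshort}. Your core mechanism is the same as the paper's: apply Ramsey to $3$-subsets and $4$-subsets of branch vertices to make the cross-path adjacencies uniform (the paper's Lemmas~\ref{lem:pinned} and~\ref{lem:cleaninterior}), and your preliminary step of passing to shortest induced $v_iv_j$-paths inside $V(P_{ij})$ is exactly what the paper sweeps into the remark that a $(\leq r)$-subdivision subgraph yields an $(r,w)$-web. The genuine difference is the endgame. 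In the non-uniform case you fix one or two anchors, run the remaining indices over two disjoint windows, and read off $K_{p,p}$ directly as a \emph{subgraph}; the paper instead collects $\xi$ pairwise non-anticomplete vertex sets of bounded size and invokes Lemma~\ref{lem:smallsetanti} (one more Ramsey on pairs) to extract $K_t$ or $K_{t,t}$ as an \emph{induced} subgraph. Your shortcut is cheaper and suffices for the statement as cited, but it is precisely what you would have to replace by something like Lemma~\ref{lem:smallsetanti} to get the induced outcome of Theorem~\ref{mainshort}; conversely, the paper's route also keeps all branch vertices in a single clique-subdivision (not merely a bipartite one) together with the original subdivision paths, which your construction of $\tilde H$ in the clean case in fact already delivers if you simply refrain from discarding the within-class paths. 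The only places needing care in your write-up are the ones you flag: in the triple case one must check that the uniform extra edge is not incident with the shared branch vertex (ruled out by chord-freeness of $\tilde P_{ik}$, as you say) so that the $2p$ selected vertices are genuinely distinct.
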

It is worth noting that Theorem~\ref{thm:lozin} (modulo the ``shortness'' of the subdivision of $K_{r,r}$ in the outcome) is an immediate corollary of the main result of \cite{dvorak}, asserting that for every $p\in \poi$ and every graph $H$, the class of all graphs with no subgraph isomorphic to $K_{p,p}$ and no induced subgraph isomorphic to a subdivision of $H$ has ``bounded expansion.'' Roughly speaking, this means the conclusion of Theorem~\ref{thm:lozin} remains true under the weaker assumption that $G$ contains, as a subgraph, a ``short'' subdivision of any graph with sufficiently large average degree (rather than a large enough complete graph). This result in turn extends (while critically building upon the main ideas from) a famous theorem of K\"{u}hn and Osthus \cite{KO}, the proof of which is fairly long and complicated.

The proof of Theorem~\ref{thm:lozin}, however, is quite short. Moreover, it actually yields a proper $(\leq  p)$-subdivision of $K_{r,r}$ as an induced subgraph of $G$ whose branch vertices are also branch vertices of the initial $(\leq p)$-subdivision of $K_m$ in $G$. Accordingly, Lozin and Razgon \cite{lozin} asked whether this can be strengthened to guarantee a proper subdivision of a complete graph (rather than a complete bipartite graph) with the same property. More exactly, they proposed the following:

\begin{question}[Lozin and Razgon, Problem 1 in \cite{lozin}]\label{q:lozin}
    Is it true that for all positive integers $r$ and $p$, there is a positive integer $m = m(r,p)$ such that every graph $G$ containing a $(\leq p)$-subdivision of $K_m$ as a subgraph contains either $K_{p,p}$ as a subgraph or a proper $(\leq p)$-subdivision of $K_r$ as an induced subgraph, whose branch vertices are also the branch vertices of the $K_m$?
\end{question}

We give an affirmative answer to Question~\ref{q:lozin}. Indeed, we show that both the branch vertices and the subdivision paths between them can be preserved. In order to state our main result precisely, we need a few definitions.

Let $X$ be a set and let $q \in \poi\cup \{0\}$. We denote by $2^{X}$ the set of all subsets of $X$ and by $\binom{X}{q}$ the set of all $q$-subsets of $X$. Given a graph $G$, by a \textit{path in $G$} we mean an induced subgraph of $G$ which is a path. If $L$ is a path in $G$, we call the vertices of degree less than two in $L$ the \textit{ends} of $L$, and refer to the set of all other vertices in $L$ as the \textit{interior} of $L$, denoted by $L^*$ (so a path of length at most one has empty interior). A \textit{stable set} in $G$ is a set of pairwise non-adjacent vertices in $G$. Two subsets $X,Y\subseteq V(G)$ are said to be \textit{anticomplete} if there is no edge in $G$ with an end in $X$ and an end in $Y$. If $X=\{x\}$, we also say \textit{$x$ is anticomplete to $Y$} to mean $\{x\}$ and $Y$ are anticomplete.
\medskip 

For   $w\in \poi$, by a \textit{$w$-web in $G$} we mean a pair $(W,\Lambda)$ where
\begin{enumerate}[(W1), leftmargin=15mm, rightmargin=7mm]
     \item\label{W1} $W$ is a $w$-subset of $V(G)$;
     \item\label{W2} $\Lambda:\binom{W}{2}\rightarrow 2^{V(G)}$ is a map such that $\Lambda(\{x,y\})=\Lambda_{x,y}$ is a path in $G$ with ends $x,y$ for every $2$-subset $\{x,y\}$ of $W$; and
     \item\label{W3} we have $\Lambda_{x,y}\cap \Lambda_{x',y'}=\{x,y\}\cap \{x',y'\}$ for all distinct $2$-subsets $\{x,y\},\{x',y'\}$ of $W$.
 \end{enumerate}
Also, for $r \in \poi\cup \{0\}$, an \textit{$(r,w)$-web in $G$} is a $w$-web $(W,\Lambda)$ in $G$ such that for every $2$-subset $\{x,y\}$ of $W$, the path $\Lambda_{x,y}$ has length at most $r+1$. It follows that for every $r \in \poi\cup \{0\}$, $G$ contains a $(\leq r)$-subdivision of $K_w$ as a subgraph if and only if there is an $(r,w)$-web in $G$.

Our main result in this note is the following:

\begin{theorem}\label{mainshort}
   For all   $r \in \poi\cup \{0\}$ and $s,t\in \poi$, there exists   $\Omega=\Omega(r,s,t)\in \poi$ with the following property. Let $G$ be a graph and let $(W,\Lambda)$ be an $(r,\Omega)$-web in $G$. Then one of the following holds.

     \begin{enumerate}[\rm (a)]
        \item\label{mainshort_a} $G$ contains an induced subgraph isomorphic to $K_t$ or $K_{t,t}$.
         \item\label{mainshort_b} There exists $S\subseteq W$ with $|S|=s$ such that:
        \begin{itemize}
        \item[-] $S$ is a stable set in $G$; 
         \item[-] for all three vertices $x,y,z\in S$, $x$ is anticomplete to $\Lambda^*_{y,z}$; and
            \item[-] for all distinct $2$-subsets $\{x,y\},\{x',y'\}$ of $S$, $\Lambda^*_{x,y}$ and $\Lambda^*_{x',y'}$ are anticomplete in $G$.
        \end{itemize}
      \end{enumerate}

\end{theorem}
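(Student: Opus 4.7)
The plan is to prove Theorem~\ref{mainshort} by iterated Ramsey-type arguments on subsets of~$W$ of uniformities $2$, $3$, and~$4$, using the Kővári--Sós--Turán theorem to rule out the non-trivial monochromatic colours. Throughout I assume that $G$ has no induced $K_t$ and no induced $K_{t,t}$ (otherwise outcome~\ref{mainshort_a} holds); the set~$S$ required by outcome~\ref{mainshort_b} will emerge as the final monochromatic set, and $\Omega$ is to be chosen as a tower of Ramsey numbers in $r$, $s$, and~$t$. The first step is elementary: a $2$-colouring of $\binom{W}{2}$ by whether $xy\in E(G)$, combined with Ramsey and $K_t$-freeness, produces a stable set $W_1\subseteq W$ of any prescribed size, giving the stability requirement in~\ref{mainshort_b}.

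For condition (ii), I apply $3$-uniform Ramsey to $\binom{W_1}{3}$ under the $4$-colour palette that records, for each triple $T=\{x,y,z\}$, the multiset over $v\in T$ of the indicator ``$v$ has a neighbour in $\Lambda^*_{T\setminus\{v\}}$''. A monochromatic subset $W_2\subseteq W_1$ with all-zero colour gives (ii). Otherwise each triple contributes at least one witness edge of $G$ between $W_2$ and $A:=\bigcup_{\{x,y\}\in\binom{W_2}{2}}\Lambda^*_{x,y}$; since distinct triples produce distinct edges (each edge determines the unique triple it witnesses), I have at least $\binom{|W_2|}{3}$ bipartite edges on parts of sizes $|W_2|$ and $|A|\le r\binom{|W_2|}{2}$. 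The Kővári--Sós--Turán theorem applied to $(W_2, A)$ then forces a subgraph $K_{R(t,t),R(t,t)}$ once $|W_2|$ is large, and $K_t$-freeness extracts an induced $K_{t,t}$ by taking a stable $t$-subset in each part -- a contradiction.

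For the $3$-vertex case of (iii), I repeat the $3$-uniform Ramsey on $\binom{W_2}{3}$ under the analogous $4$-colour palette that records, for each triple $T$, the multiset over $v\in T$ of ``$\Lambda^*_{v,w_1}$ and $\Lambda^*_{v,w_2}$ have an edge of $G$ between them'', where $\{w_1,w_2\}=T\setminus\{v\}$. A monochromatic $W_3\subseteq W_2$ with all-zero colour gives the $3$-vertex case of (iii). Otherwise each triple yields at least one witness edge inside $G[A]$, but these sit in a pair-space of size of order $|W_3|^4$, so naive KST on $G[A]$ is too weak. The decisive observation is that each such witness edge is confined to the star $G[A_v]$ of its unique shared branch vertex $v\in W_3$, where $A_v:=\bigcup_{y\in W_3\setminus\{v\}}\Lambda^*_{v,y}$ has size at most $r(|W_3|-1)$. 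Averaging over $v\in W_3$ picks some $v$ carrying at least $\binom{|W_3|}{3}/|W_3|$ distinct witness edges in $G[A_v]$, and KST applied to $G[A_v]$ now delivers $K_{R(t,t),R(t,t)}$, again upgradable to induced $K_{t,t}$. Finally, a $4$-uniform Ramsey on $\binom{W_3}{4}$ under the $4$-colour palette recording, for each $4$-set $Q$, the multiset over the three perfect matchings $\{\pi_1,\pi_2\}$ of $Q$ of ``$\Lambda^*_{\pi_1}$ and $\Lambda^*_{\pi_2}$ have an edge between them'', yields a monochromatic $W_4\subseteq W_3$; all-zero colour gives the $4$-vertex case of (iii), while any other colour produces $\binom{|W_4|}{4}$ distinct edges inside $G[A]$, and with $|A|$ of order $|W_4|^2$ the Kővári--Sós--Turán bound on $G[A]$ directly supplies $K_{R(t,t),R(t,t)}\subseteq G[A]$ -- hence induced $K_{t,t}$. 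Taking $S:=W_4$ finishes the proof.

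The step I expect to be the main obstacle is the concentration argument inside the $3$-vertex case of~(iii): unlike in the $4$-tuple step, where the order-$|W|^4$ witnesses comfortably exceed the KST threshold for $G[A]$, the $3$-vertex witnesses produce only order-$|W|^3$ edges inside $G[A]$ and cannot be handled directly; recognising that each witness is localised to the star of its shared branch vertex and then averaging to concentrate density there is the critical idea that makes the KST bound bite. Everything else is Ramsey bookkeeping, choosing $\Omega$ so that each of the three passes retains enough vertices to force $|W_4|\ge s$.
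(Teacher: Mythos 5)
Your argument is correct in outline and, after the Ramsey-extraction phase, takes a genuinely different route from the paper. The paper never invokes the K\H{o}v\'ari--S\'os--Tur\'an theorem: in the ``bad'' colour cases it exploits the \emph{full} strength of monochromaticity to extract a complete incidence pattern (in Lemma~\ref{lem:pinned}, $a$ branch vertices each meeting each of $b$ disjoint paths; in Lemma~\ref{lem:cleaninterior}, two $c$-families of paths with every interior of one meeting every interior of the other --- the shared-apex and disjoint-pair cases of condition (iii) being folded into a single $4$-uniform colouring with $2^{15}$ colours via the trick of writing the bad pair $\{P,P'\}$ with $j\notin P'$, $j'\notin P$ and allowing $i=i'$), and then feeds the resulting pairwise-disjoint, pairwise-touching bounded-size sets into a separate pair-Ramsey lemma (Lemma~\ref{lem:smallsetanti}) to produce $K_t$ or $K_{t,t}$. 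You instead use only the weakest consequence of monochromaticity --- every triple or $4$-set carries at least one witness edge --- and convert the resulting edge count into $K_{k,k}$ via KST, upgrading to an induced copy by Ramsey with $k=R(t,t)$. Your injectivity claims (each witness edge determines its triple or $4$-set, because the path interiors are pairwise disjoint and disjoint from $W$) are sound, and you correctly identify and repair the one place where naive counting fails: the shared-apex case of (iii) yields only order $n^3$ witnesses inside a set of order $rn^2$ vertices, and your localisation to the stars $A_v$ followed by averaging restores the density needed for KST. The trade-off is that the paper's proof is self-contained (Ramsey only) and handles the two cases of (iii) simultaneously, whereas yours imports KST and needs an extra $3$-uniform pass plus the averaging idea; both give tower-type bounds, so nothing quantitative is lost. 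The remaining work in your write-up is routine: the multiset colourings (4 colours rather than the paper's $2^{[3]}$ or $2^{15}$) suffice because you only ever use ``all-zero versus not,'' and the constants in the three KST thresholds just need to be absorbed into the inside-out choice of the Ramsey parameters defining $\Omega$.
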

We prove this theorem in the next section. One may readily observe that Theorem~\ref{mainshort} answers Question~\ref{q:lozin} in the affirmative. 

\section{Proof of Theorem~\ref{mainshort}}\label{sec:proof}

Let us begin with the multicolor version of Ramsey's Theorem for complete uniform hypergraphs:

\begin{theorem}[Ramsey \cite{multiramsey}]\label{multiramsey}
For all   $f,g,n\in \poi$, there exists   $\rho(f,g,n)\in \poi$ with the following property. Let $U$ be a set of cardinality at least $\rho(f,g,n)$ and let $F$ be a non-empty set of cardinality at most $f$. Let $\Phi:\binom{U}{g}\rightarrow F$ be a map. Then there exist $i\in F$ and $Z\subseteq U$ with $|Z|=n$ such that for every $X\in \binom{Z}{g}$, we have $\Phi(X)=i$.
\end{theorem}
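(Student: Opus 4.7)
The plan is to prove Theorem~\ref{mainshort} by iteratively applying Theorem~\ref{multiramsey} to $k$-subsets of $W$ for $k \in \{2, 3, 4\}$, in order to uniformize all relevant adjacency data, and then by performing a case analysis on the resulting uniform ``patterns.'' Fix a linear order $<$ on $W$. First, I apply Theorem~\ref{multiramsey} with $g = 2$ to $W$, coloring each pair $\{x, y\}$ by the length of $\Lambda_{x, y}$ (at most $r + 1$ colors). This yields a large $W_1 \subseteq W$ on which every subdivision path has the same length $\ell \in \{1, \dots, r + 1\}$. If $\ell = 1$, then $W_1$ is a clique in $G$, so outcome (a) of Theorem~\ref{mainshort} holds as soon as $|W_1| \geq t$. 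Otherwise $\ell \geq 2$, in which case $W_1$ is automatically a stable set because every $\Lambda_{x,y}$ with $x, y \in W_1$ is then an induced path of length at least two. Assuming $\ell \geq 2$, for each pair $\{y, z\} \subseteq W_1$ with $y < z$ I label the vertices of $\Lambda^*_{y, z}$ as $v^{y, z}_1, \dots, v^{y, z}_{\ell - 1}$ in order from $y$'s end to $z$'s end.

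Next, I apply Theorem~\ref{multiramsey} with $g = 3$ to $W_1$, coloring each ordered triple $x < y < z$ by six data: the three subsets of $\{1, \dots, \ell - 1\}$ recording the adjacency pattern of each of $x, y, z$ to the interior of the opposite path, and the three subsets of $\{1, \dots, \ell - 1\}^2$ recording the bipartite adjacency pattern between each of the three pairs of interiors from the triple (grouped by whether the common endpoint is $x$, $y$, or $z$). This yields $W_2 \subseteq W_1$ on which every triple has the same six-tuple of pattern coordinates $(A^*, B^*, C^*, D^*, E^*, F^*)$. I then apply Theorem~\ref{multiramsey} with $g = 4$ to $W_2$, coloring each ordered 4-tuple by three further subsets of $\{1, \dots, \ell - 1\}^2$ --- one for each of the three partitions of the 4-set into two disjoint pairs --- recording the bipartite adjacency pattern between the two corresponding interiors. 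This yields $W_3 \subseteq W_2$ on which all 4-tuples share a common triple $(P_1^*, P_2^*, P_3^*)$.

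The final step is a case analysis on the nine pattern coordinates. If all of $A^*, B^*, C^*, D^*, E^*, F^*, P_1^*, P_2^*, P_3^*$ are empty, then taking $S = W_3$ (with $|W_3| \geq s$) directly delivers outcome (b) of Theorem~\ref{mainshort}: stability is inherited from $W_1$; the empty branch-to-interior coordinates give the second bullet; and the empty interior-to-interior coordinates (applied to triples for pairs with a common endpoint, and to 4-tuples for disjoint pairs) give the third bullet. Otherwise at least one pattern coordinate is nonempty, and I claim we can extract an induced $K_t$ or $K_{t, t}$. For the representative subcase $(i, j) \in D^*$, fix $y \in W_3$ with $t$ predecessors $X$ and $t$ successors $Z$ in $W_3$, and set $U = \{v^{x, y}_i : x \in X\}$ and $V = \{v^{y, z}_j : z \in Z\}$. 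By the $D^*$-uniformity, every vertex of $U$ is adjacent to every vertex of $V$; the internal adjacencies within $U$ are governed by whether $(i, i) \in F^*$ and those within $V$ by whether $(j, j) \in E^*$, so either at least one of $U, V$ induces $K_t$, or both are stable sets and induce $K_{t, t}$ together. Nonempty $E^*$ or $F^*$ is treated symmetrically, nonempty $P_1^*, P_2^*$, or $P_3^*$ calls for an analogous 4-tuple construction, and the remaining subcase --- in which only one of $A^*, B^*, C^*$ is nonempty while all six interior-to-interior patterns are empty --- is handled by picking $u_1 < \dots < u_t < w_1 < \dots < w_t$ in $W_3$ and setting $\nu_{a, b} = v^{w_a, w_b}_i$ for some $i \in A^*$: then $u_k \sim \nu_{a, b}$ for all $k, a, b$ by $A^*$, and all $\nu_{a, b}$ are pairwise non-adjacent by the vanishing of the interior-to-interior patterns, giving an induced $K_{t, t}$.

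The hard part will be the bookkeeping in the case analysis: each nonempty pattern coordinate requires its own configuration, and one must verify carefully --- using the linear order on $W$ and the uniformity of patterns on $W_3$ --- that the chosen vertices actually induce the claimed subgraph. Once all subcases are verified, the value $\Omega(r, s, t)$ is obtained by back-solving through the three Ramsey invocations: starting from the requirement $|W_3| \geq \max(s, 2t + 1)$, one successively applies Theorem~\ref{multiramsey} with $(g, f) = (4, 2^{3r^2})$, then $(3, 2^{3(r + r^2)})$, and finally $(2, r + 1)$ to produce a suitable lower bound on $\Omega$.
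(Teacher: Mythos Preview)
You have misidentified the statement to be proved. The statement in question is Theorem~\ref{multiramsey}, the multicolor hypergraph Ramsey theorem, which the paper does \emph{not} prove: it is quoted as a classical result with a citation to Ramsey's 1930 paper. Your proposal is instead a proof sketch for Theorem~\ref{mainshort}, the paper's main theorem. No amount of case analysis on webs in $G$ will establish Ramsey's theorem, since the latter is a purely combinatorial statement about colorings of $g$-subsets of an abstract set and has nothing to do with graphs, webs, or subdivision paths.

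That said, your sketch is a plausible direct route to Theorem~\ref{mainshort}, and it is worth comparing it to what the paper actually does there. The paper factors the argument: Lemma~\ref{lem:pinned} (a $g=3$ Ramsey step) handles branch-vertex-to-path adjacencies, Lemma~\ref{lem:cleaninterior} (a $g=4$ Ramsey step) handles interior-to-interior adjacencies, these are chained in Theorem~\ref{mainthm}, and only then is Lemma~\ref{lem:smallsetanti} invoked to extract $K_t$ or $K_{t,t}$ from the ``bad'' outcomes. Your approach collapses this into a single pipeline of three Ramsey calls ($g=2,3,4$) that uniformizes \emph{all} adjacency data simultaneously, followed by one global case analysis. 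This is valid and arguably cleaner, though the price is a heavier case analysis at the end (nine pattern coordinates, each needing its own extraction argument), whereas the paper offloads the extraction step to the single reusable Lemma~\ref{lem:smallsetanti}. One small point to watch in your sketch: in the subcase where only $A^*$ (or $B^*$, $C^*$) is nonempty, you assert an induced $K_{t,t}$ between $\{u_1,\dots,u_t\}$ and $\{\nu_{a,b}\}$, but you also need the $u_k$'s to be pairwise non-adjacent to one another and non-adjacent to each other's $\nu$'s except as claimed; stability of $W_1$ gives the first, but you should check the second explicitly.
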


The next two lemmas have similar proofs, both relying on Theorem~\ref{multiramsey}. For an integer $n$, we denote by $[n]$ the set of all positive integers less than or equal to $n$.

\begin{lemma}\label{lem:pinned}
    For all   $a,b,s\in \poi$, there exists   $\tau=\tau(a,b,s)\in \poi$ with the following property. Let $G$ be a graph and let $(W,\Lambda)$ be a $\tau$-web in $G$. Then one of the following holds.
    \begin{enumerate}[\rm (a)]
        \item\label{lem:pinned_a} There exists $A\subseteq W$ with $|A|=a$ and a collection $\mathcal{B}$ of pairwise disjoint $2$-subsets of $W\setminus A$ with $|\mathcal{B}|=b$ such that for every $x\in A$ and every $\{y,z\}\in \mathcal{B}$, $x$ has a neighbor in $\Lambda_{y,z}$.
        \item\label{lem:pinned_b} There exists $S\subseteq W$ with $|S|=s$ such that:
        \begin{itemize}
        \item[-] $S$ is a stable set in $G$; and
         \item[-] for all three vertices $x,y,z\in S$, $x$ is anticomplete to $\Lambda_{y,z}^*$.
        \end{itemize}
    \end{enumerate}
\end{lemma}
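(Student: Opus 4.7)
The plan is to apply Ramsey's Theorem (Theorem~\ref{multiramsey}) twice in succession. First I would $2$-color the pairs in $\binom{W}{2}$ by whether they form an edge of $G$, and extract a large monochromatic subset $W_1\subseteq W$. If $W_1$ is a clique, then outcome~(a) follows almost for free: any $x\in W_1$ is adjacent to both endpoints of $\Lambda_{y,z}$ for any $\{y,z\}\subseteq W_1\setminus\{x\}$, so $x$ has a neighbor in $\Lambda_{y,z}$, and choosing any disjoint $A,\mathcal{B}\subseteq W_1$ with $|A|=a$ and $|\mathcal{B}|=b$ does the job.

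The substantive case is when $W_1$ is a stable set. Fix an arbitrary linear order $\prec$ on $W_1$. I would then apply Theorem~\ref{multiramsey} to the $8$-coloring of $\binom{W_1}{3}$ in which the triple $\{u,v,w\}$ with $u\prec v\prec w$ receives the color $(b_1,b_2,b_3)\in\{0,1\}^3$ with $b_i=1$ iff the $i$-th vertex of the triple has a neighbor in $\Lambda^{*}$ of the remaining two. This yields a monochromatic $W_2\subseteq W_1$ of prescribed size. If the color is $(0,0,0)$, outcome~(b) is immediate: any $s$-subset $S\subseteq W_2$ is stable (since $W_1$ is) and the interior-anticompleteness condition for triples from $S$ is exactly what monochromaticity guarantees.

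If instead some $b_i=1$, I would produce outcome~(a) by arranging $A$ and $\mathcal{B}$ so that each $x\in A$ sits in the $i$-th position of the ordered triple $\{x\}\cup\{y,z\}$ for every $\{y,z\}\in\mathcal{B}$. For $i=1$, take $A$ to be the $\prec$-smallest $a$ vertices of $W_2$ and $\mathcal{B}$ any $b$ pairwise disjoint pairs from the remaining vertices; for $i=3$, take $A$ to be the $\prec$-largest $a$ instead. The delicate case $i=2$ is the main obstacle: I would split $W_2$ into a $\prec$-initial block of size $b$, a middle block of size $a$, and a final block of size $b$, set $A$ equal to the middle block, and pair up the initial and final blocks to form $\mathcal{B}$. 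This ensures each $x\in A$ is sandwiched between the two endpoints of every $\{y,z\}\in\mathcal{B}$ in $\prec$, so $x$ is the median of its triple and the coloring produces a neighbor of $x$ in $\Lambda^{*}_{y,z}\subseteq\Lambda_{y,z}$.

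Tracing the sizes backward, it suffices to set $\tau=\rho(2,2,\rho(8,3,\max(a+2b,s)))$. The only real difficulty is the ``median'' subcase $i=2$; once the tripartite split is in place, the rest is routine Ramsey bookkeeping.
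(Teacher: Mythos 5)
Your proof is correct and takes essentially the same route as the paper: a Ramsey colouring of triples recording which position of the triple has a neighbour in the path between the other two, with the median case $i=2$ resolved exactly as in the paper by sandwiching $A$ between two blocks that are paired up to form $\mathcal{B}$. The only cosmetic differences are that the paper folds the stability requirement into a single Ramsey application by colouring with respect to the full paths $\Lambda_{y,z}$ rather than their interiors (so no preliminary $2$-colouring of pairs is needed), and it reserves three $a$-blocks at once (hence $3a+2b$ instead of your $a+2b$) rather than case-splitting on which position is ``bad.''
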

\begin{proof}
We claim that 
$$\tau(a,b,s)=\rho(8,3,\max\{3a+2b,s\})$$
satisfies the lemma, where $\rho(\cdot,\cdot,\cdot)$ comes from Theorem~\ref{multiramsey}.

Assume that \ref{lem:pinned}\ref{lem:pinned_a}
does not hold. Fix an enumeration $W=\{w_1,\ldots, w_{\tau}\}$ of $W$. For every $3$-subset $T$ of $[\tau]$, let $\Phi(T)\in 2^{[3]}$ be defined as follows. Let $T=\{t_1,t_2,t_3\}$ such that $t_1<t_2<t_3$. Then for every $i\in [3]$, we have $i\in \Phi(T)$ if and only if $w_{t_i}$ has a neighbor in $G$ in $\Lambda_{w_{t_j},w_{t_k}}$, where $\{j,k\}=[3]\setminus \{i\}$.

It follows that the map $\Phi:\binom{[\tau]}{3}\rightarrow 2^{[3]}$ is well-defined. Due to the choice of $\tau$, we can apply Theorem~\ref{multiramsey} and deduce that there exists $F\subseteq [3]$ as well as $Z\subseteq [\tau]$ with $|Z|=\max\{3a+2b,s\}$ such that for every $T\in \binom{Z}{3}$, we have $\Phi(T)=F$. In particular, since $|Z|\geq 3a+2b$, we may choose $I_1,I_2,I_3,J,K\subseteq Z$ with $|I_1|=|I_2|=|I_3|=a$ and $|J|=|K|=b$, such that
$$\max I_1<\min J\leq \max J <\min I_2\leq \max I_2<\min K\leq \max K<\min I_3.$$
It follows that $I_1,I_2,I_3,J,K$ are pairwise disjoint. We now show that:

\sta{\label{st:Fisedgeless} $F$ is empty.}

Suppose not. Pick $f\in F\subseteq [3]$. Write $J=\{j_1,\ldots, j_b\}$ and $K=\{k_1,\ldots, k_b\}$. Note that for every $i\in I_f$ and every $t\in [b]$, $\{i,j_t,k_t\}$ is a $3$-subset of $Z$. Thus, we have $\Phi(\{i,j_t,k_t\})=F$, and so $f\in \Phi(\{i,j_t,k_t\})$. This, along with the definition of $\Phi$ and the sets $I_1,I_2,I_3,J,K$ implies that for every $i\in I_f$ and every $t\in [b]$, $w_i$ has a neighbor in $\Lambda_{w_{j_t},w_{k_t}}$. But then $A=\{w_i:i\in I_f\}$ and $\mathcal{B}=\{(w_{j_t},w_{k_t}):t\in [b]\}$ satisfy \ref{lem:pinned}\ref{lem:pinned_a}, a contradiction. This proves \eqref{st:Fisedgeless}.

\medskip

Since $|Z|\geq \max\{s,3\}$, it follows from \eqref{st:Fisedgeless} that for every $Z'\subseteq Z$ with $|Z'|=s$,  $S=\{w_i:i\in Z\}\subseteq W$ satisfies \ref{lem:pinned}\ref{lem:pinned_b}. This completes the proof of Lemma~\ref{lem:pinned}.
\end{proof}

\begin{lemma}\label{lem:cleaninterior}
    For all   $c,s\in \poi$, there exists   $\sigma=\sigma(c,s)\in \poi$ with the following property. Let $G$ be a graph and let $(W,\Lambda)$ be a $\sigma$-web in $G$. Then one of the following holds.

     \begin{enumerate}[\rm (a)]
          \item\label{lem:cleaninterior_a} There are disjoint subsets $\mathcal{C}$ and $\mathcal{C}'$ of $\binom{W}{2}$ with $|\mathcal{C}|=|\mathcal{C}'|=c$ such that for every $\{x,y\}\in \mathcal{C}$ and every $\{x',y'\}\in \mathcal{C}'$, $\Lambda_{x,y}^*$ and $\Lambda_{x',y'}^*$ are not anticomplete in $G$. 
        \item\label{lem:cleaninterior_b} There exists $S\subseteq W$ with $|S|=s$ such that for all distinct $2$-subsets $\{x,y\},\{x',y'\}$ of $S$, $\Lambda^*_{x,y}$ and $\Lambda^*_{x',y'}$ are anticomplete in $G$.
      \end{enumerate}

\end{lemma}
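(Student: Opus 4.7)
The plan is to extend the Ramsey template of Lemma~\ref{lem:pinned} through two successive applications of Theorem~\ref{multiramsey}: one on $4$-subsets of a fixed enumeration of $W$ to control \emph{disjoint} pairs of subdivision paths, and one on $3$-subsets of the resulting monochromatic set to control the remaining pairs of subdivision paths that share a single endpoint. A natural target value is
$$\sigma(c,s)=\rho\bigl(8,4,\max\{4c,\rho(8,3,\max\{s,2c+1\})\}\bigr).$$

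Fix an enumeration $W=\{w_1,\dots,w_\sigma\}$, and for $\{t_1<t_2<t_3<t_4\}\in\binom{[\sigma]}{4}$ let $\Phi(\{t_1,t_2,t_3,t_4\})\subseteq[3]$ record which of the three perfect matchings $\{\{t_1,t_2\},\{t_3,t_4\}\}$, $\{\{t_1,t_3\},\{t_2,t_4\}\}$, $\{\{t_1,t_4\},\{t_2,t_3\}\}$ on $\{t_1,\dots,t_4\}$ consists of two $2$-subsets $\{p,q\},\{p',q'\}$ with $\Lambda^*_{w_p,w_q}$ and $\Lambda^*_{w_{p'},w_{q'}}$ not anticomplete in $G$. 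Theorem~\ref{multiramsey} produces $Z\subseteq[\sigma]$ of size $\max\{4c,\rho(8,3,\max\{s,2c+1\})\}$ monochromatic under $\Phi$, of colour $F$. If $F\ne\emptyset$, I fix $i\in F$, list $Z=\{z_1<z_2<\cdots\}$, and exhibit one of the three index patterns
\begin{align*}
i=1:\;&\mathcal{C}=\{\{w_{z_{2k-1}},w_{z_{2k}}\}\}_{k\le c},\;\mathcal{C}'=\{\{w_{z_{2c+2k-1}},w_{z_{2c+2k}}\}\}_{k\le c};\\
i=2:\;&\mathcal{C}=\{\{w_{z_{k}},w_{z_{2c+k}}\}\}_{k\le c},\;\mathcal{C}'=\{\{w_{z_{c+k}},w_{z_{3c+k}}\}\}_{k\le c};\\
i=3:\;&\mathcal{C}=\{\{w_{z_{k}},w_{z_{3c+k}}\}\}_{k\le c},\;\mathcal{C}'=\{\{w_{z_{c+k}},w_{z_{2c+k}}\}\}_{k\le c};
\end{align*}
each requires only $|Z|\ge 4c$, and a short inspection of the induced orderings confirms that every one of the $c^2$ cross $4$-subsets realises the $i$th matching, so every cross pair of interiors is non-anticomplete and \ref{lem:cleaninterior}\ref{lem:cleaninterior_a} follows.

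If instead $F=\emptyset$, every disjoint pair of $2$-subsets inside $Z$ already has anticomplete interiors, and only the vertex-sharing case remains. I then apply Theorem~\ref{multiramsey} to $\binom{Z}{3}$ with the colouring $\Psi(\{t_1<t_2<t_3\})\subseteq[3]$ recording which of $(\{t_1,t_2\},\{t_1,t_3\})$, $(\{t_1,t_2\},\{t_2,t_3\})$ or $(\{t_1,t_3\},\{t_2,t_3\})$ is a pair of $2$-subsets with non-anticomplete path-interiors. Pick $Z'\subseteq Z$ of size $\max\{s,2c+1\}$ monochromatic under $\Psi$, of colour $F'$. If $F'=\emptyset$, then every pair of distinct $2$-subsets of $Z'$ has anticomplete interiors (disjoint case by $\Phi$, shared-vertex case by $\Psi$), and any $s$-subset of $\{w_z:z\in Z'\}$ yields \ref{lem:cleaninterior}\ref{lem:cleaninterior_b}. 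If $F'\ne\emptyset$, I take as pivot the smallest (when $1\in F'$), middle (when $2\in F'$), or largest (when $3\in F'$) element $p$ of $Z'$, and define $\mathcal{C},\mathcal{C}'$ by pairing $p$ with the first $c$ and the last $c$ of the remaining elements respectively; the pivot's position forces every cross $3$-subset to be of the type flagged by $F'$, and \ref{lem:cleaninterior}\ref{lem:cleaninterior_a} follows.

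The main obstacle I anticipate is the arithmetic bookkeeping in the branch $F\ne\emptyset$: one has to check that the ``interleaved'' and ``nested'' index patterns produce the targeted partition inside \emph{all} $c^2$ cross $4$-subsets, not just the diagonal ones. The two-stage Ramsey framework itself is a routine extension of Lemma~\ref{lem:pinned}, made possible by the observation that \ref{lem:cleaninterior}\ref{lem:cleaninterior_a} places no disjointness constraint on the individual $2$-subsets inside $\mathcal{C}\cup\mathcal{C}'$, which is precisely what licenses the pivot construction in the second stage.
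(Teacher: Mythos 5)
Your proof is correct, and it reaches the same two outcomes by the same underlying mechanism as the paper: apply Ramsey's theorem to an ordered enumeration of $W$, colouring small index sets by the anticompleteness pattern of the corresponding path interiors, and then extract $\mathcal{C},\mathcal{C}'$ from structured index patterns inside the monochromatic set. The difference is in the decomposition. The paper uses a \emph{single} application of Theorem~\ref{multiramsey} to $4$-subsets with the full colour set $2^{\binom{\binom{[4]}{2}}{2}}$ of size $2^{15}$, so that one colour simultaneously records the behaviour of disjoint \emph{and} endpoint-sharing pairs of $2$-subsets, and then extracts $\mathcal{C},\mathcal{C}'$ from four consecutive blocks $T_1<T_2<T_3<T_4$ of size $c$. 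You instead nest two Ramsey applications with only $8$ colours each: a $4$-uniform one for the three perfect matchings (disjoint path pairs), and a $3$-uniform one for the three endpoint-sharing configurations, resolved by your pivot construction. Your interleaved/nested patterns for $i=1,2,3$ and the smallest/middle/largest pivots do realise the targeted configuration in every cross $4$-subset (resp.\ $3$-subset), and your observation that outcome \ref{lem:cleaninterior}\ref{lem:cleaninterior_a} imposes no disjointness among the members of $\mathcal{C}$ themselves is exactly what makes the pivot step legitimate. The trade-off is a doubly-nested Ramsey bound in place of a single larger colour set; in exchange, your separate $3$-uniform stage handles the shared-endpoint case with completely explicit positional bookkeeping, whereas the paper's one-shot argument treats that case (the subcase $i=i'$ in its claim (2)) rather tersely, since there the pair $\{P,P'\}$ involves only three of the four block indices and the positions of two elements drawn from the same block need a word of justification. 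So your version is, if anything, slightly more robust on that point.
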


\begin{proof}
We claim that 
$$\sigma(c,s)=\rho(2^{15},4,\max\{4c,s\})$$
satisfies the lemma, with $\rho(\cdot,\cdot,\cdot)$ as in Theorem~\ref{multiramsey}. Assume that \ref{lem:cleaninterior}\ref{lem:cleaninterior_a}
does not hold.

Fix an enumeration $W=\{w_1,\ldots, w_{\sigma}\}$ of $W$. For every $4$-subset $T=\{t_1,t_2,t_3,t_4\}$ of $[\sigma]$ with $t_1<t_2<t_3<t_4$, let $\Phi(T)$ be the set of all $2$-subsets $\{\{i,j\},\{i',j'\}\}$ of $\binom{[4]}{2}$ for which $\Lambda_{w_{i},w_j}^*$ and $\Lambda_{w_{i'},w_{j'}}^*$ are not anticomplete in $G$. Then the map
$$\Phi:\binom{[\sigma]}{4}\rightarrow 2^{\displaystyle\binom{ \binom{[4]}{2}}{2}}$$
is well-defined. Consequently, by the choice of $\sigma$ and Theorem~\ref{multiramsey}, there exists a collection $F$ of $2$-subsets of $\binom{[4]}{2}$ as well as a subset $Z$ of $[\sigma]$ with $|Z|=\max\{4c,s\}$ such that for every $T\in \binom{Z}{4}$, we have $\Phi(T)=F$. In particular, since $|Z|\geq 4c$, we may choose $T_1,T_2,T_3,T_4\subseteq Z$ with $|T_1|=|T_2|=|T_3|=|T_4|=c$ such that
$$\max T_1<\min T_2\leq \max T_2 <\min T_3\leq \max T_3<\min T_4.$$
It follows that $T_1,T_2,T_3,T_4$ are pairwise disjoint. Let us now show that:

\sta{\label{st:Fisedgeless2} $F$ is empty.}

Suppose not. Let $\{P,P'\}\in F$, where $P,P'$ are distinct $2$-subsets of $[4]$. Then we may write $P=\{i,j\}$ and $P'=\{i',j'\}$ such that $j\notin P'$ and $j'\notin P$ (while $i=i'$ is possible). Pick an element $t_i\in T_i$ and an element $t_{i'}\in T_{i'}$ (this is possible because $c\in \poi$). It follows that for every $t\in T_j$ and every $t'\in T_{j'}$, $\{t_i,t_{i'},t,t'\}$ is a $4$-subset of $Z$. Therefore, we have $\Phi(\{t_i,t_{i'},t,t'\})=F$, and so $\{P,P'\}\in \Phi(\{t_i,t_{i'},t,t'\})$. This, together with the definition of $\Phi$ and the sets $T_1,T_2,T_3,T_4$ implies that for every $t\in T_j$ and every $t'\in T_{j'}$, $\Lambda^*_{w_{t_i},w_t}$ and $\Lambda^*_{w_{t_{i'}},w_{t'}}$ are not anticomplete in $G$. But then $\mathcal{C}=\{(w_{t_{i}},w_{t}):t\in T_{j}\}$ and $\mathcal{C}'=\{(w_{t_{i'}},w_{t'}):t'\in T_{j'}\}$ satisfy \ref{lem:cleaninterior}\ref{lem:cleaninterior_a}, a contradiction. This proves \eqref{st:Fisedgeless2}.

\medskip

Since $|Z|\geq s$, there exists $Z'\subseteq Z$ with $|Z'|=s$. Now by \eqref{st:Fisedgeless2}, $S=\{w_i:i\in Z\}\subseteq W$ satisfies \ref{lem:cleaninterior}\ref{lem:cleaninterior_b}. This completes the proof of Lemma~\ref{lem:cleaninterior}.
\end{proof}

Combining Lemmas~\ref{lem:pinned} and \ref{lem:cleaninterior}, we deduce the following strengthening of Theorem~\ref{mainshort}:

\begin{theorem}\label{mainthm}
    For all   $a,b,c,s\in \poi$, there exists   $\theta=\theta(a,b,c,s)\in \poi$ with the following property. Let $G$ be a graph and let $(W,\Lambda)$ be a $\theta$-web in $G$. Then one of the following holds.

     \begin{enumerate}[\rm (a)]
         \item\label{thm:mainthm_a} There exists $A\subseteq W$ with $|A|=a$ and a collection $\mathcal{B}$ of pairwise disjoint $2$-subsets of $W\setminus A$ with $|\mathcal{B}|=b$ such that for every $x\in A$ and every $\{y,z\}\in \mathcal{B}$, $x$ has a neighbor in $\Lambda_{y,z}$.
          \item\label{thm:mainthm_b} There are disjoint subsets $\mathcal{C}$ and $\mathcal{C}'$ of $\binom{W}{2}$ with $|\mathcal{C}|=|\mathcal{C}'|=c$ such that for every $\{x,y\}\in \mathcal{C}$ and every $\{x',y'\}\in \mathcal{C}'$, $\Lambda_{x,y}^*$ is not anticomplete to $\Lambda_{x',y'}^*$ in $G$. 
        \item\label{thm:mainthm_c} There exists $S\subseteq W$ with $|S|=s$ such that:
        \begin{itemize}
        \item[-] $S$ is a stable set in $G$;
         \item[-] for all three vertices $x,y,z\in S$, $x$ is anticomplete to $\Lambda^*_{y,z}$; and
            \item[-] for all distinct $2$-subsets $\{x,y\},\{x',y'\}$ of $S$, $\Lambda^*_{x,y}$ is anticomplete to $\Lambda^*_{x',y'}$.
        \end{itemize}
      \end{enumerate}
\end{theorem}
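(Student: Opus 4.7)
The plan is to prove Theorem~\ref{mainthm} by simply chaining Lemmas~\ref{lem:pinned} and~\ref{lem:cleaninterior}. I would set
$$\theta(a,b,c,s) \;=\; \sigma\bigl(c,\;\tau(a,b,s)\bigr),$$
where $\sigma$ comes from Lemma~\ref{lem:cleaninterior} and $\tau$ from Lemma~\ref{lem:pinned}.

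First I would apply Lemma~\ref{lem:cleaninterior} to the given $\theta$-web $(W,\Lambda)$ with parameters $c$ and $\tau(a,b,s)$. If outcome~\ref{lem:cleaninterior}\ref{lem:cleaninterior_a} occurs, this is already outcome~\ref{thm:mainthm_b} of the theorem. Otherwise outcome~\ref{lem:cleaninterior}\ref{lem:cleaninterior_b} furnishes a subset $S_1 \subseteq W$ with $|S_1|=\tau(a,b,s)$ such that $\Lambda^*_{x,y}$ and $\Lambda^*_{x',y'}$ are anticomplete in $G$ for all distinct $2$-subsets $\{x,y\},\{x',y'\}$ of $S_1$.

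Next I would observe, by a routine check of conditions~\ref{W1}--\ref{W3}, that the restricted pair $(S_1,\,\Lambda|_{\binom{S_1}{2}})$ is itself a $\tau(a,b,s)$-web in $G$. Applying Lemma~\ref{lem:pinned} to this sub-web yields either outcome~\ref{lem:pinned}\ref{lem:pinned_a}, which is immediately outcome~\ref{thm:mainthm_a} of Theorem~\ref{mainthm} (because $A \subseteq S_1 \subseteq W$, $\mathcal{B}$ consists of pairwise disjoint $2$-subsets of $S_1 \setminus A \subseteq W\setminus A$, and the path $\Lambda_{y,z}$ is the same in the sub-web as in the original web), or outcome~\ref{lem:pinned}\ref{lem:pinned_b}, producing a stable set $S \subseteq S_1$ with $|S|=s$ such that $x$ is anticomplete to $\Lambda^*_{y,z}$ for all $x,y,z \in S$. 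Since $S \subseteq S_1$, the pairwise anticompleteness of interiors guaranteed for $S_1$ is inherited by $S$, supplying the third bullet of outcome~\ref{thm:mainthm_c}; all three bullets of outcome~\ref{thm:mainthm_c} are thus satisfied.

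There is no essential obstacle in this argument: the only point worth checking is that both of the ``clean'' conclusions produced by the two lemmas are monotone under passing from a web to the sub-web induced on a subset of branch vertices, so they compose without interference. Either order of application would work in principle; cleaning up interior--interior interactions first and only then handling pinning is merely notationally lighter.
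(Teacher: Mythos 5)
Your proof is correct and follows essentially the same route as the paper: compose Lemmas~\ref{lem:pinned} and~\ref{lem:cleaninterior}, using that each lemma's ``clean'' outcome is inherited by sub-webs. The paper applies Lemma~\ref{lem:pinned} first (taking $\theta=\tau(a,b,\sigma(c,s))$) and Lemma~\ref{lem:cleaninterior} second, whereas you reverse the order, but as you note both orders work for the same reason, so this is only a cosmetic difference.
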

\begin{proof}
Let $\theta=\theta(a,b,c,s)=\tau(a,b,\sigma(c,s))$, where $\tau(\cdot,\cdot,\cdot)$ and $\sigma(\cdot,\cdot)$ come from Lemmas~\ref{lem:pinned} and \ref{lem:cleaninterior}, respectively. Let $G$ be a graph and let $(W,\Lambda)$ be a $\theta$-web in $G$. The choice of $\theta$ calls for an application of Lemma~\ref{lem:pinned} to $(W,\Lambda)$. Since \ref{lem:pinned}\ref{lem:pinned_a} and \ref{mainthm}\ref{thm:mainthm_a} are identical, we may assume that \ref{lem:pinned}\ref{lem:pinned_b} holds. It follows that there exists $\Sigma\subseteq W$ with $|\Sigma|=\sigma(c,s)$ such that:
        \begin{itemize}
        \item $\Sigma$ is a stable set in $G$; and
         \item for all three vertices $x,y,z\in \Sigma$, $x$ is anticomplete to $\Lambda_{y,z}^*$.
         \end{itemize}
In particular, $(\Sigma,\Lambda|_{\binom{\Sigma}{2}})$ is a $\sigma(c,s)$-web in $G$ to which we can apply Lemma~\ref{lem:cleaninterior}.

The result now follows from the fact that \ref{mainthm}\ref{thm:mainthm_b} is identical to \ref{lem:cleaninterior}\ref{lem:cleaninterior_a} and \ref{mainthm}\ref{thm:mainthm_c} is identical to \ref{lem:cleaninterior}\ref{lem:cleaninterior_b} combined with the above two bullet conditions.
\end{proof}

In order to deduce Theorem~\ref{mainshort} from Theorem~\ref{mainthm}, we need the following well-know lemma. This has been observed several times independently, for instance, in Lemma 2 from \cite{lozin}. We give a proof, as it is short and keeps the note self-contained.

\begin{lemma}\label{lem:smallsetanti}
 For all   $r,t\in \poi$ there exists   $\xi=\xi(r,t)\in \poi$ with the following property. Let $G$ be a graph. Let $X_1,\ldots, X_{\xi}$ be pairwise disjoint subsets of $V(G)$, each of cardinality at most $r$. Assume that for all distinct $i,j\in [\xi]$, $X_i$ and $X_j$ are not anticomplete in $G$ (and so $X_1,\ldots, X_{\xi}$ are all non-empty). Then $G$ contains an induced subgraph isomorphic to $K_t$ or $K_{t,t}$.
\end{lemma}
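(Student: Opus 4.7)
The plan is a two-step Ramsey argument. First, since each $|X_i| \leq r$, pigeonholing on $|X_i|$ yields a subset $I_0 \subseteq [\xi]$ of size at least $\xi/r$ with a common value $r' := |X_i|$ for $i \in I_0$; fix, for every $i \in I_0$, an arbitrary enumeration $X_i = \{x_i^1, \ldots, x_i^{r'}\}$. Second, for each pair $\{i, j\} \subseteq I_0$ with $i < j$, define the ``type'' $P_{ij} := \{(a, b) \in [r']^2 : x_i^a x_j^b \in E(G)\}$. There are at most $2^{(r')^2} \leq 2^{r^2}$ possible types, so I would apply Theorem~\ref{multiramsey} with $f = 2^{r^2}$, $g = 2$, and $n = 2t$ to find $I \subseteq I_0$ with $|I| = 2t$ on which every pair has a common type $P$; and because by hypothesis $X_i, X_j$ are not anticomplete, $P$ is non-empty.

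I would then split on whether $P$ contains a diagonal entry. If $(c, c) \in P$ for some $c \in [r']$, then $x_i^c x_j^c \in E(G)$ for all $i < j$ in $I$, so $\{x_i^c : i \in I\}$ is a clique of size $2t$ in $G$, yielding an induced $K_t$. Otherwise $(c, c) \notin P$ for every $c \in [r']$, and since $P \neq \emptyset$ I may pick $(a, b) \in P$ with $a \neq b$. Write $I = \{i_1 < \cdots < i_{2t}\}$, and set $A := \{x_{i_p}^a : 1 \leq p \leq t\}$ and $B := \{x_{i_q}^b : t+1 \leq q \leq 2t\}$. Since $(a,a), (b,b) \notin P$, both $A$ and $B$ are stable in $G$; since $(a, b) \in P$ and every index in $A$ precedes every index in $B$, the pair $A, B$ is complete bipartite in $G$. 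Hence $G[A \cup B]$ is an induced $K_{t,t}$, and setting $\xi(r,t) := r \cdot \rho(2^{r^2}, 2, 2t)$ (with $\rho$ as in Theorem~\ref{multiramsey}) suffices.

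The main obstacle — and the reason the argument records the \emph{full} bipartite pattern between $X_i$ and $X_j$, rather than just a single witnessing edge — is that producing an \emph{induced} $K_{t,t}$ requires both sides to be stable. Ramseying only on the position of a single chosen edge in $[r]^2$ would yield many parallel edges of the same type but no guarantee that the two endpoint families are independent; tracking the entire pattern lets the diagonal/off-diagonal dichotomy above resolve this automatically, at the cost of a harmless $2^{r^2}$-factor in the Ramsey bound.
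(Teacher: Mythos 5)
Your proof is correct and follows essentially the same route as the paper: Ramsey's theorem applied to the full adjacency pattern between pairs $X_i, X_j$ viewed as a subset of $[r]^2$, followed by the diagonal/off-diagonal dichotomy to extract an induced $K_t$ or $K_{t,t}$. The initial pigeonhole on $|X_i|$ is a harmless extra step the paper avoids (it simply takes the type to be a subset of $[r]^2$ regardless of the actual sizes), and your explicit handling of the index ordering when building the two sides of the $K_{t,t}$ is, if anything, slightly more careful than the paper's.
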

\begin{proof}
Let $\xi(r,t)=\rho(2^{r^2},2,2t)$, with $\rho(\cdot,\cdot,\cdot)$ as in Theorem~\ref{multiramsey}. Suppose for a contradiction that $G$ contains neither $K_t$ nor $K_{t,t}$ as an induced subgraph. For each $i\in [\xi]$, fix an enumeration $X_i=\{x_{i,1},\ldots, x_{i,|X_i|}\}$.  For every $2$-subset $\{i,j\}$ of $[\xi]$, let 
$$\Phi(\{i,j\})=\{(f,f'):x_{i,f}x_{j,f'}\in E(G)\}\subseteq [|X_i|]\times [|X_j|]\subseteq [r]^2.$$
By the assumption of \ref{lem:smallsetanti}, the map $\Phi:\binom{[\xi]}{2}\rightarrow 2^{[r]^2}\setminus \{\emptyset\}$ is well-defined. From the choice of $\xi$ and Theorem~\ref{multiramsey}, it follows that there are disjoint $t$-subsets $Z,Z'$ of $[\xi]$ as well as a non-empty subset $F$ of $[r]^2$ such that for every $2$-subset $\{i,j\}$ of $Z\cup Z'$, we have $\Phi(\{i,j\})=F$. Moreover, the following holds.

\sta{\label{st:nondiag} For every $f\in [r]$, we have $(f,f)\notin F$.}

For otherwise it follows from the definition of $\Phi$ that $G[\{x_{i,f}:i\in Z\}]$ is isomorphic to $K_t$, a contradiction. This proves \eqref{st:nondiag}.

\medskip

From \eqref{st:nondiag} and the fact that $F\neq \emptyset$, it follows that for some distinct $f,f'\in [r]$, we have $(f,f')\in F$ while $(f,f)\in F$ and $(f,f)\in F$. By the definition of $\Phi$, for every $i\in Z$ and every $i'\in Z'$, we have $x_{i,f}x_{i',f'}\in E(G)$, while $\{x_{i,f}:i\in Z\}$ and $\{x_{i',f'}:i'\in Z'\}$ are both stable sets in $G$. But now $G[\{x_{i,f}:i\in Z\}\cup \{x_{i',f'}:i\in Z'\}]$ is isomorphic to $K_{t,t}$, a contradiction. This completes the proof of Lemma~\ref{lem:smallsetanti}
\end{proof}
We now prove our main result, which we restate:
\setcounter{section}{1}
\setcounter{theorem}{2}
\begin{theorem}\label{mainshort}
   For all   $r \in \poi\cup \{0\}$ and $s,t\in \poi$, there exists   $\Omega=\Omega(r,s,t)\in \poi$ with the following property. Let $G$ be a graph and let $(W,\Lambda)$ be an $(r,\Omega)$-web in $G$. Then one of the following holds.

     \begin{enumerate}[\rm (a)]
        \item\label{mainshort_a} $G$ contains an induced subgraph isomorphic to $K_t$ or $K_{t,t}$.
         \item\label{mainshort_b} There exists $S\subseteq W$ with $|S|=s$ such that:
        \begin{itemize}
        \item[-] $S$ is a stable set in $G$; 
         \item[-] for all three vertices $x,y,z\in S$, $x$ is anticomplete to $L^*_{y,z}$; and
            \item[-] for all distinct $2$-subsets $\{x,y\},\{x',y'\}$ of $S$, $\Lambda^*_{x,y}$ and $\Lambda^*_{x',y'}$ are anticomplete in $G$.
        \end{itemize}
      \end{enumerate}

\end{theorem}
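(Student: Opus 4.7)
My plan is to derive Theorem~\ref{mainshort} from Theorem~\ref{mainthm} combined with Lemma~\ref{lem:smallsetanti}. I would pick $a,b,c$ sufficiently large in terms of $r$ and $t$, set the $s$ parameter of Theorem~\ref{mainthm} to equal the $s$ of Theorem~\ref{mainshort}, let $\Omega=\theta(a,b,c,s)$ (with $\theta$ as in Theorem~\ref{mainthm}), and apply Theorem~\ref{mainthm} to the given $(r,\Omega)$-web $(W,\Lambda)$. Since \ref{mainthm}\ref{thm:mainthm_c} is syntactically identical to \ref{mainshort}\ref{mainshort_b}, that outcome already delivers the conclusion. So the task reduces to showing that each of \ref{mainthm}\ref{thm:mainthm_a} and \ref{mainthm}\ref{thm:mainthm_b} forces an induced $K_t$ or $K_{t,t}$ in $G$, with Lemma~\ref{lem:smallsetanti} as the workhorse once its pairwise non-anticomplete hypothesis has been installed by a Ramsey preparation.

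For outcome \ref{mainthm}\ref{thm:mainthm_a}, the paths $\Lambda_{y,z}$ with $\{y,z\}\in \mathcal{B}$ are pairwise disjoint subsets of $V(G)$ of size at most $r+2$, and each vertex of $A$ has a neighbor in each of them. I would apply Theorem~\ref{multiramsey} to the $2$-subsets of $\mathcal{B}$ colored by whether the two corresponding paths are anticomplete in $G$. For $b$ sufficiently large, some monochromatic sub-collection $\mathcal{B}'\subseteq \mathcal{B}$ has size $\xi(r+2,t)$; in the non-anticomplete case, Lemma~\ref{lem:smallsetanti} applied to these paths (of size at most $r+2$) finishes immediately. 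Otherwise the paths in $\mathcal{B}'$ are pairwise anticomplete, and I would then run a two-layer pigeonhole: inside each $\Lambda_{y,z}$ some vertex $v_{y,z}$ has at least $|A|/(r+2)$ neighbors in $A$, and pigeonholing across $\mathcal{B}'$ over the $2^{|A|}$ possible such neighborhood patterns fixes a common $A'\subseteq A$ with $|A'|\geq |A|/(r+2)$ and a sub-collection $\mathcal{B}''\subseteq \mathcal{B}'$ of size at least $|\mathcal{B}'|/2^{|A|}$ such that $A'$ is complete to $\{v_{y,z}:\{y,z\}\in \mathcal{B}''\}$. Pairwise anticompleteness of the paths makes $\{v_{y,z}:\{y,z\}\in \mathcal{B}''\}$ a stable set; a final application of Theorem~\ref{multiramsey} on $A'$ either produces an induced $K_t$ or a stable subset of $A'$ of size $t$, the latter combining with $t$ of the $v_{y,z}$'s to yield $K_{t,t}$.

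Outcome \ref{mainthm}\ref{thm:mainthm_b} succumbs to the same template, with the interiors $\Lambda^*_{x,y}$ (of size at most $r$) playing the role of the paths. Running Theorem~\ref{multiramsey} inside each of $\mathcal{C}$ and $\mathcal{C}'$ with the anticomplete-or-not coloring on $2$-subsets either hands a large pairwise non-anticomplete sub-collection to Lemma~\ref{lem:smallsetanti}, or else leaves both sides pairwise anticomplete within themselves while preserving bipartite non-anticompleteness between. In that leftover bipartite case, a bipartite Ramsey/pigeonhole argument on positions (assigning each pair of interiors an element of $[r]^2$ coding the positions of the witnessing adjacency) selects single vertices $u_I\in I$ for $I$ in a sub-collection $\mathcal{C}_1\subseteq \mathcal{C}$ of size $t$ and $v_{I'}\in I'$ for $I'$ in a sub-collection $\mathcal{C}_1'\subseteq \mathcal{C}'$ of size $t$ with $u_Iv_{I'}\in E(G)$ throughout; internal anticompleteness then forces both $\{u_I:I\in \mathcal{C}_1\}$ and $\{v_{I'}:I'\in \mathcal{C}_1'\}$ to be stable sets, so their join is an induced $K_{t,t}$.

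The chief obstacle is precisely the ``pairwise anticomplete'' branch of each Ramsey dichotomy, where only bipartite non-anticompleteness is available and does not feed Lemma~\ref{lem:smallsetanti} directly; the nested pigeonhole plus terminal Ramsey sketched above is what bridges the gap. Carrying this through forces $a$, $b$, $c$ to be chosen in a layered fashion---roughly $a\geq (r+2)\cdot \rho(2,2,t)$, with $b$ at least $2^{a}\cdot \rho(2,2,t)$ composed with the Ramsey number driving the first dichotomy, and analogous bounds on $c$---so the extracted $\Omega(r,s,t)$ is a tower function of $r,s,t$ governed by Theorem~\ref{multiramsey}.
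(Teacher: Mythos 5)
Your overall skeleton is the same as the paper's: choose $\Omega$ via Theorem~\ref{mainthm}, note that outcome \ref{mainthm}\ref{thm:mainthm_c} is literally outcome \ref{mainshort}\ref{mainshort_b}, and show that outcomes \ref{mainthm}\ref{thm:mainthm_a} and \ref{mainthm}\ref{thm:mainthm_b} each force an induced $K_t$ or $K_{t,t}$ via Lemma~\ref{lem:smallsetanti}. Where you diverge is in the bridging step, and here you work considerably harder than necessary. You correctly identify the apparent obstacle --- that outcomes \ref{mainthm}\ref{thm:mainthm_a} and \ref{mainthm}\ref{thm:mainthm_b} only give a \emph{bipartite} non-anticompleteness pattern (every vertex of $A$ against every path indexed by $\mathcal{B}$; every interior from $\mathcal{C}$ against every interior from $\mathcal{C}'$), whereas Lemma~\ref{lem:smallsetanti} wants \emph{pairwise} non-anticomplete sets --- and you bridge it with an extra layer of Ramsey dichotomies, nested pigeonholes on neighborhoods and positions, and an unstated but standard bipartite Ramsey step. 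The paper dissolves the obstacle in one line by bundling the two sides of the bipartition along the diagonal: in outcome \ref{mainthm}\ref{thm:mainthm_a} it sets $X_i=\{x_i\}\cup\Lambda_{y_i,z_i}$ (pairing the $i$-th pinning vertex with the $i$-th path), and in outcome \ref{mainthm}\ref{thm:mainthm_b} it sets $X_i=\Lambda^*_{x_i,y_i}\cup\Lambda^*_{x'_i,y'_i}$ (pairing the $i$-th member of $\mathcal{C}$ with the $i$-th member of $\mathcal{C}'$). For $i\neq j$ the set $X_i$ meets $X_j$ in an edge because the first half of $X_i$ is non-anticomplete to the second half of $X_j$, so the $X_i$ are pairwise disjoint, of size at most $\max\{r+3,2r\}$, and pairwise non-anticomplete, and Lemma~\ref{lem:smallsetanti} applies directly with $\Omega=\theta(\xi,\xi,\xi,s)$ where $\xi=\xi(\max\{r+3,2r\},t)$. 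Your route appears to be implementable (the two-layer pigeonhole in case \ref{thm:mainthm_a} and the position-coded bipartite Ramsey in case \ref{thm:mainthm_b} both go through, using that $W$ meets each path only in its ends and that the interiors are pairwise disjoint), but it costs you an extra tower of Ramsey applications, correspondingly worse bounds on $\Omega$, and an auxiliary bipartite Ramsey lemma that the paper never needs; the union trick gets the same conclusion with no additional machinery.
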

\begin{proof}
Let $\xi=\xi(\max\{r+3,2r\},t)$ be as in Lemma~\ref{lem:smallsetanti}.
We prove that $\Omega=\Omega(r,s,t)=\theta(\xi,\xi,\xi,s)$ satisfies the theorem. Suppose that \ref{mainshort}\ref{mainshort_b} does not hold.

\sta{\label{st:pwtouch} There are pairwise disjoint subsets $X_1,\ldots, X_{\xi}$ of $V(G)$, each of cardinality at most $\max\{r+3,2r\}$, such that for all distinct $i,j\in [\xi]$, $X_i$ and $X_j$ are not anticomplete in $G$.}

To see this, note that by the choice of $\Omega$, we can apply Theorem~\ref{mainthm} to the $\Omega$-web $(W,\Lambda)$. Since \ref{mainshort}\ref{mainshort_b} and \ref{mainthm}\ref{thm:mainthm_c} are identical, we conclude that one of the following holds.
\begin{itemize}
    \item There exists $A=\{x_1,\ldots, x_{\xi}\}\subseteq W$ and a collection $\mathcal{B}=\{y_i,z_i\}:i\in [\xi]\}$ of pairwise disjoint $2$-subsets of $W\setminus A$ such that for all $i,j\in [\xi]$, $x_i$ has a neighbor in $\Lambda_{y_j,z_j}$; or
    \item There are two disjoint subsets of  $\binom{W}{2}$, namely $\mathcal{C}=\{\{x_i,y_i\}:i\in [\xi]\}$ and $\mathcal{C}'=\{\{x'_i,y'_i\}:i\in [\xi]\}$, such that for all $i,j\in [\xi]$, $\Lambda_{x_i,y_i}^*$ is not anticomplete to $\Lambda_{x'_j,y'_j}^*$ in $G$.  
\end{itemize}
For every $i\in [\xi]$, in the former case above, let $X_i=\{x_i\}\cup \Lambda_{y_i,z_i}$, and in the latter case above, let $X_i=\Lambda^*_{x_i,y_i}\cup \Lambda^*_{x'_i,y'_i}$. Then for all distinct $i,j\in [\xi]$, $X_i$ and $X_j$ are disjoint, and there is an edge in $G$ with an end in $X_i$ and an end in $X_j$. Also, recall that $(W,\Lambda)$ is an $(r,\Omega)$-web, and so $|X_i|\leq \max\{r+3,2r\}$ for all $i\in [\xi]$. This proves \eqref{st:pwtouch}.

\medskip

Now \eqref{st:pwtouch} along with the choice of $\xi$ and Lemma~\ref{lem:smallsetanti} implies that $G$ contains an induced subgraph isomorphic to $K_t$ or $K_{t,t}$. Hence, Theorem~\ref{mainshort}\ref{mainshort_a} holds.
\end{proof}
\setcounter{section}{2}
\section{Acknowledgement}
Our thanks to Bogdan Alecu, Maria Chudnovsky and Sophie Spirkl for stimulating discussions.


\begin{thebibliography} {99}

   \bibitem{dvorak}
 Z. Dvo\v{r}\'{a}k. \newblock {``Induced subdivisions and bounded expansion.''} {\em European Journal of Combinatorics} {\bf69}, (2018), 347 -- 352. 

\bibitem{KO}
D. K\"{u}hn and D. Osthus. \newblock {``Induced subdivisions in {$K_{s,s}$}-free graphs of large average degree.''}
{\em Combinatorica.} {\bf24} (2), (2004), 287--304. 

  \bibitem{lozin} V. Lozin and I. Razgon. ``Tree-width dichotomy.'' \emph{European J. Combinatorics} \textbf{103} (2022), 103 -- 517.


 \bibitem{multiramsey}   F. P. Ramsey.  ``On a problem of formal logic.'' \textit {Proc. London Math. Soc.}
      {\bf 30}, (1930), 264 -- 286.

  
%    \bibitem{cactustw}
      
  \end{thebibliography}
\end{document}